\documentclass[12pt]{amsart}
\usepackage{amsfonts}

\textwidth 160 mm
\textheight 230 mm
\hoffset=-15 mm
\voffset=-5 mm

\newtheorem{theorem}{Theorem}[section]
\newtheorem{corollary}[theorem]{Corollary}

\newtheorem{proposition}[theorem]{Proposition}

\newtheorem{remark}[theorem]{Remark}

\def\NN{\hbox{\sf I\kern-.13em\hbox{N}}}
\def\RR{\hbox{\sf I\kern-.14em\hbox{R}}}

\def\Cc{\hbox{\sf C\kern -.47em {\raise .48ex \hbox{$\scriptscriptstyle |$}}
   \kern-.5em {\raise .48ex \hbox{$\scriptscriptstyle |$}} }}

\newcommand{\be}{\begin{equation}}
\newcommand{\ee}{\end{equation}}


\begin{document}

\baselineskip 7mm

\title[Logarithmic convexity of fixed points ]
{Logarithmic convexity of fixed points of stochastic kernel operators}

\author{Aljo\v{s}a Peperko}
\date{\thanks{} \today}

\begin{abstract}
\baselineskip 7mm

In this paper we prove results on logaritmic convexity of fixed points of stochastic kernel operators.
These results are expected to play a key role in the economic application to strategic market games.
\end{abstract}

\maketitle

\noindent
{\it Math. Subj.  Classification (2000)}: 47B34, 15B51, 91B02, 47B65, 15B48, 15A42, 47A10 \\
{\it Key words}: positive kernel operators, stochastic operators, eigenfunctions, non-negative matrices, mathematical economics.  \\
\section{Introduction}

In this paper we prove results on logaritmic convexity of fixed points of stochastic kernel operators.
These result generalize and extend the finite dimensional results from \cite{S93}. We extend the results of \cite{S93}  even in the case of $n \times n$ matrices.  These results were originally motivated by 
economic considerations with application to strategic market games  (see e.g. \cite{SY89}, \cite{ASSY90}, \cite{S10} and the references cited there). More precisely, they were, together with Kakutani's fixed point theorem (see e.g. \cite{K41}), a crucial step in the proof of the existence of Nash equilibria in the model of
an exchange economy with complete markets studied in \cite{SY89}. Thus our results are expected to be a key ingredient in the development of the infinite dimensional generalization of this economic model.

The paper is organized as follows. In the rest of the current section we recall basic definitions and facts, which we will need in our proofs and we prove our main results in Section 2 (Theorem \ref{sahi}). In Section 3 we apply our results to finite or infinite non-negative matrices that 
define weighted operators on sequence spaces (Corollary \ref{sahi_weight}) and explain how our results fit into the economic setting of strategic market games (Remark \ref{Aleluja}(ii)). In Remarks \ref{Aleluja}(i) and (iii) we also
 point out possible applications to Arrow-Debreu model and to  open Leontief model of an economy and to Google Page-rank model of internet usage. For these particular applications it might suffice, if we restricted our results to the setting of Section 3, i.e., to (infinite dimensional) non-negative matrices. However, it is well-known that kernel (integral) operators play a very important, often even central, role in a variety of applications from differential and integro-differential equations, problems from physics (in particular from thermodinamics), engineering, statistical and economic models, etc. (see e.g. \cite{J82}, \cite{R00}, \cite{BP03}, \cite{LL05}, \cite{DLR13}, \cite{P16},  \cite{W75}, \cite{O12} 
and the references cited there). Therefore we choose to present our results in this more general setting. 

Let $\mu$ be a $\sigma$-finite positive measure on a $\sigma$-algebra
$\mathcal{M}$ of subsets of a non-void set $X$. Let $M(X,\mu)_+$ be the cone of 
 all equivalence classes of (almost everywhere equal) $\mu$-measurable functions on $X$
whose values lie in $[0, \infty]$.  For  $f\in M(X,\mu)_+$ we write $f>0$ if and only if $\mu \{x\in X:f(x)>0\}>0$, and $f\gg 0$ if and only if $f(x)>0$ for almost all $x \in X$.

Let $M_0 (X,\mu)$ be the vector lattice of all equivalence classes of (almost everywhere equal) complex $\mu$-measurable functions on $X$.
A vector subspace $L \subseteq M_0 (X,\mu)$ is called an {\it ideal} if $f \in M_0 (X,\mu)$, $g \in L$ 
and $|f| \le |g|$ a.e. imply that $f \in L$. A subset $C \subset L$ is called a {\it wedge}, if   $f+g \in C$ and $\alpha f \in C$ for all $f, g \in C$ and $\alpha > 0$. 
For an ideal $L \subseteq M_0 (X,\mu)$ we say that $X$ {\it is the carrier of} $L$ if there is no subset $Y$ of $X$ of 
strictly positive measure with the property that $f = 0$ a.e. on $Y$ for all $f \in L$ (see \cite{Za83}).

A seminorm $\|\cdot\|$ on the ideal $L \subseteq M_0 (X,\mu)$ is called a 
{\it lattice seminorm} (also Riesz seminorm) if $f \in M_0 (X,\mu)$, $g \in L$ 
and $|f| \le |g|$ a.e. imply that $\|f\| \le \|g\|$. A {\it lattice norm} is a lattice seminorm which is also a norm. 
Recall that an ideal $L \subseteq M_0 (X,\mu)$ equipped with a lattice norm $\|\cdot\|$ is sometimes called a 
{\it normed K\"{o}the space} (\cite[p. 421]{Za83}) 
and that a complete normed K\"{o}the space is called a {\it Banach function space}. 
 Standard examples of Banach function spaces  are Euclidean spaces, the space $c_0$ 
of all null convergent sequences (equipped with the usual norms and the counting measure), the well known $L_p (X, \mu)$ spaces ($1 \le p \le \infty$) and other less known examples such as Orlicz, Lorentz,  Marcinkiewicz  and more general  rearrangement-invariant spaces (see e.g. \cite{BS88}, \cite{CR07} and the references cited there), which are important e.g. in interpolation theory. The set of all normed K\"{o}the spaces (or of all Banach function spaces)  is also closed under all cartesian products $L= E\times F$  equipped with the norm
$\|(x, y)\|_L=\max \{\|x\|_E, \|y\|_F\}$.

The cone of non-negative elements in $L$ is denoted by $L_{+}$. 
By an {\it operator}  from an ideal $L$ to an ideal $N$ we always 
mean a linear operator from $L$ to $N$. An operator $T:L \to N$ is said to be {\it positive} 
if $T f \in N_+$ for all $f \in L_+$. Given operators $S$ and $T$ from $L$ to $N$,
we write $T \le S$ if the operator $S - T$ is positive. A positive operator $T:L \to N$ is called strictly positive if $Tf\gg 0$ whenever  $f\gg 0$, $f\in L_+$.
Recall that a positive operator  between  a  K\"{o}the space $L$ and a Banach function space $N$ is always bounded (see \cite{AA02}, \cite{AB85}), i.e, its operator norm is finite:
\vspace{-4mm}
$$\|A\|=\sup\{\|Af\|_N : f\in L, \|f\|_L \le 1\}=\sup\{\|Af\|_N : f\in L_+, \|f\|_L \le 1\}< \infty.$$

Let $(X,\mu)$ and $(Y, \nu)$ be $\sigma$-finite measure spaces, $L$ and $N$ ideals in 
$M_0(Y,\nu)$ and $M_0(X,\mu)$ respectively, such that $Y$ is the carrier of $L$. An operator $K:L\to N$ is called a {\it kernel operator} if
there exists a $\mu \times \nu$ -measurable function $k(x,y)$ on $X \times Y$ such that, for all $f \in L$ and for almost all $x \in X$,
$ \int_X |k(x,y) f(y)| \, d\nu(y) < \infty$ and $(K f)(x) = \int_X k(x,y) f(y) \, d\nu(y) $.
One can check that a kernel operator $K$ is positive iff its kernel $k$ is non-negative almost everywhere (see \cite{Za83}). 

For the theory of normed K\"{o}the spaces, Banach function spaces, Banach lattices, cones, wedges, positive operators and applications e.g. in financial mathematics we refer the reader 
to the books  \cite{AA02}, \cite{AT07}, \cite{Za83}, 
 \cite{AB85}, \cite{ABB90}, \cite{BS88} and the references cited there. 

Let $L \subset M_0(X,\mu)$ be an ideal. A positive kernel operator $S$ on $L$ with kernel $s(x,y)$ is called
 substochastic, if  $s(y)=\int _X s(x,y)\;dx \le 1$ for almost all $y \in X$. The operator $S$ is called stochastic, if $s(y)=1$ for almost all $y\in X$, and $S$ is called strictly substochastic if $s(y)<1$ for almost all $y\in X$. Note that if $S$ is a stochastic operator on $L^1(X,\mu)$, then $S$ is a Markov operator (see e.g. \cite{R00}). 

\section{Main results}

Firstly, let us recall a version of a result from \cite[Corollary 3.2]{P06}, which follows from the sharpened version of Young's inequality
\be
x^{\alpha}y^{1-\alpha}=\inf _{t>0} \left\{\alpha \,t^{\frac{1}{\alpha}}x + (1-\alpha)t^{-\frac{1}{1-\alpha}}y \right\},
\label{sharpY}
\ee
where  $x,y \ge 0$ and $\alpha \in (0,1)$.
\begin{proposition}
Let $L \subset M_0 (X,\mu)$ be an ideal and $\alpha _i > 0$, $i=1,\dots, m$, such that $\sum _{i=1} ^m \alpha _i =1$.
 Then $f_1 ^{\alpha _1}f_2 ^{\alpha _2}\cdots f_m ^{\alpha _m} \in L$ and 
\be 
\|f_1 ^{\alpha _1}f_2 ^{\alpha _2}\cdots f_m ^{\alpha _m}\|_L\le \|f_1\|_L ^{\alpha _1}\|f_2\|_L ^{\alpha _2}\cdots \|f_m\|_L ^{\alpha _m}
\label{rho} 
\ee
for all $f_i \in L$ and any lattice seminorm $\|\cdot\|_L$.
\label{Riesz}
\end{proposition}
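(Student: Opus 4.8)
The plan is to derive the statement from the pointwise form of the sharpened Young inequality (\ref{sharpY}), first for $m=2$ and then by induction on $m$. For the base case $m=2$, write $\alpha=\alpha_1$ and $1-\alpha=\alpha_2$. Fix $t>0$ and apply the inequality underlying (\ref{sharpY}), with its two free non-negative variables set equal to $|f_1|$ and $|f_2|$, pointwise almost everywhere; this yields
\[
|f_1|^{\alpha}|f_2|^{1-\alpha}\le \alpha\, t^{1/\alpha}|f_1|+(1-\alpha)\,t^{-1/(1-\alpha)}|f_2|.
\]
The right-hand side is a non-negative element of $L$ because $f_1,f_2\in L$ and $L$ is a vector space; since $|f_1^{\alpha}f_2^{1-\alpha}|=|f_1|^{\alpha}|f_2|^{1-\alpha}$ is dominated by it, the ideal property of $L$ gives $f_1^{\alpha}f_2^{1-\alpha}\in L$, settling the membership claim.

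Next I would apply the lattice seminorm $\|\cdot\|_L$ to this inequality: monotonicity with respect to absolute value, subadditivity, and positive homogeneity give, for every fixed $t>0$,
\[
\|f_1^{\alpha}f_2^{1-\alpha}\|_L\le \alpha\, t^{1/\alpha}\|f_1\|_L+(1-\alpha)\,t^{-1/(1-\alpha)}\|f_2\|_L.
\]
The key point is that the optimization over $t$ can now be performed at the level of scalars: taking the infimum over $t>0$ of the right-hand side and invoking (\ref{sharpY}) once more, this time with its variables set equal to $\|f_1\|_L$ and $\|f_2\|_L$, produces exactly $\|f_1\|_L^{\alpha}\|f_2\|_L^{1-\alpha}$, which is the desired bound (\ref{rho}) for $m=2$. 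The degenerate cases in which some $\|f_i\|_L=0$ are absorbed automatically, since (\ref{sharpY}) remains valid for all $x,y\ge 0$ under the convention $0^{\alpha}=0$.

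For the inductive step, assuming (\ref{rho}) for $m-1$ factors, set $\beta_i=\alpha_i/(1-\alpha_m)$ for $i=1,\dots,m-1$, so that $\sum_{i=1}^{m-1}\beta_i=1$, and put $g=f_1^{\beta_1}\cdots f_{m-1}^{\beta_{m-1}}$. The induction hypothesis gives $g\in L$ with $\|g\|_L\le\prod_{i=1}^{m-1}\|f_i\|_L^{\beta_i}$. Writing $f_1^{\alpha_1}\cdots f_m^{\alpha_m}=g^{1-\alpha_m}f_m^{\alpha_m}$ and applying the already established two-factor case with exponents $1-\alpha_m$ and $\alpha_m$ then yields the full inequality, using $\beta_i(1-\alpha_m)=\alpha_i$.

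I expect the only genuinely delicate point to be the clean separation of the two uses of (\ref{sharpY}) — once pointwise on the functions, to obtain both membership and a $t$-dependent seminorm bound, and once on the scalars $\|f_i\|_L$, to optimize — together with verifying that the measurability and ideal conditions are met so that $f_1^{\alpha_1}\cdots f_m^{\alpha_m}$ genuinely lies in $L$. Everything else reduces to routine manipulation of the seminorm axioms.
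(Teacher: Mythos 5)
Your proof is correct and follows essentially the same route the paper intends: the paper gives no proof of Proposition \ref{Riesz}, citing instead \cite[Corollary 3.2]{P06} with the remark that it follows from the sharpened Young inequality (\ref{sharpY}), and your argument — applying (\ref{sharpY}) pointwise to obtain membership in $L$ together with a $t$-dependent seminorm bound, then optimizing over $t$ at the scalar level via a second application of (\ref{sharpY}), and finally inducting on $m$ — is precisely that derivation, carried out in full.
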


In particular, for $\alpha _i > 0$, $i=1,\dots, m$, such that $\sum _{i=1} ^m \alpha _i = 1$, the following well-known
generalized H\"{o}lder's inequality holds:
\be
 \int \! f_1^{\alpha_1} \, f_2^{\alpha_2} \cdots f_m^{\alpha_m} \, d\mu \le 
   \left( \int \! f_1 \, d\mu \right)^{\alpha_1} \left( \int \! f_2 \, d\mu \right)^{\alpha_2}
   \cdots  \left( \int \! f_m \, d\mu \right)^{\alpha_m} , 
\label{gHold}
\ee
where $f_1, \ldots, f_m$ are non-negative measurable functions on $X$.

The following result follows easily. 
\begin{proposition} Let $L$ and $N$ be ideals in $M_0(Y,\nu)$ and $M_0(X,\mu)$ respectively, such that $Y$ is the carrier of $L$. If $K:L \to N$ is a positive kernel operator and $\alpha _i > 0$, $i=1,\dots, m$, such that $\sum _{i=1} ^m \alpha _i =1$, then 
\be
K(f_1 ^{\alpha _1}f_2 ^{\alpha _2} \cdots f_m ^{\alpha _m}) \le (Kf_1 )^{\alpha _1}(Kf_2) ^{\alpha _2} \cdots (Kf_m) ^{\alpha _m} 
\label{Kalpha}
\ee
for all $f_1, \ldots , f_m \in L_+$.

If, in addition, $\|\cdot\|_N $ is a lattice seminorm on $N$, then
\be
\|K(f_1 ^{\alpha _1}f_2 ^{\alpha _2} \cdots f_m ^{\alpha _m})\|_N \le \|(Kf_1 )^{\alpha _1}(Kf_2) ^{\alpha _2} \cdots (Kf_m) ^{\alpha _m}\|_N  \le \|Kf_1 \| _N ^{\alpha _1}\|Kf_2 \| _N ^{\alpha _2} \cdots \|Kf_m\|_N  ^{\alpha _m}. 
\label{Kseminorm}
\ee
\label{alfe}
\end{proposition}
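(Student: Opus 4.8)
The plan is to reduce both assertions to the pointwise generalized H\"{o}lder inequality (\ref{gHold}) and to Proposition \ref{Riesz}. Since $K$ is a positive kernel operator, its kernel satisfies $k(x,y) \ge 0$ for almost all $(x,y)$, and $(Kf)(x) = \int_Y k(x,y) f(y)\, d\nu(y)$. The key observation is that, because $\sum_{i=1}^m \alpha_i = 1$, we may factor the kernel as $k(x,y) = k(x,y)^{\alpha_1} \cdots k(x,y)^{\alpha_m}$ (with the convention $0^{\alpha}=0$ for $\alpha>0$, so the identity also holds where $k$ vanishes). Hence, for $f_i \in L_+$,
\[
k(x,y)\, f_1(y)^{\alpha_1}\cdots f_m(y)^{\alpha_m} = \bigl(k(x,y) f_1(y)\bigr)^{\alpha_1} \cdots \bigl(k(x,y) f_m(y)\bigr)^{\alpha_m}.
\]

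Integrating this identity in $y$ and applying (\ref{gHold}) to the non-negative functions $g_i(y) := k(x,y) f_i(y)$ (with $x$ fixed and measure $\nu$ on $Y$) gives, for almost all $x$,
\[
K(f_1^{\alpha_1}\cdots f_m^{\alpha_m})(x) = \int_Y g_1^{\alpha_1}\cdots g_m^{\alpha_m}\, d\nu \le \Bigl(\int_Y g_1\, d\nu\Bigr)^{\alpha_1}\cdots \Bigl(\int_Y g_m\, d\nu\Bigr)^{\alpha_m} = (Kf_1)(x)^{\alpha_1}\cdots (Kf_m)(x)^{\alpha_m},
\]
which is exactly (\ref{Kalpha}). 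For the seminorm estimates, the first inequality in (\ref{Kseminorm}) is immediate from (\ref{Kalpha}): both sides of (\ref{Kalpha}) are non-negative elements of $N$, so monotonicity of the lattice seminorm $\|\cdot\|_N$ yields $\|K(f_1^{\alpha_1}\cdots f_m^{\alpha_m})\|_N \le \|(Kf_1)^{\alpha_1}\cdots (Kf_m)^{\alpha_m}\|_N$. The second inequality in (\ref{Kseminorm}) is then precisely Proposition \ref{Riesz} applied in the ideal $N$ to the functions $Kf_1,\ldots,Kf_m \in N$, which also certifies that $(Kf_1)^{\alpha_1}\cdots (Kf_m)^{\alpha_m} \in N$.

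The remaining points are measure-theoretic bookkeeping rather than genuine obstacles. I should verify that $f_1^{\alpha_1}\cdots f_m^{\alpha_m} \in L$ so that the left-hand side is meaningful; this is exactly the membership assertion of Proposition \ref{Riesz} applied in $L$, so that $K$ may be applied to it. I should also note that the manipulations are valid for almost every $x$, using that $(Kf_i)(x) < \infty$ a.e. since each $f_i \in L$ and $K$ is a kernel operator, and that the elements $f_i$ of $L_+$ are finite almost everywhere, so no indeterminate expressions arise in the factorization. With these routine verifications in place, the \emph{easily} of the statement is justified, and the proof is complete.
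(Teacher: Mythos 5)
Your proof is correct and follows essentially the same route as the paper: factor the kernel as $k(x,y)=k(x,y)^{\alpha_1}\cdots k(x,y)^{\alpha_m}$, apply the generalized H\"{o}lder inequality (\ref{gHold}) pointwise in $x$ to get (\ref{Kalpha}), and then deduce (\ref{Kseminorm}) from (\ref{Kalpha}) together with Proposition \ref{Riesz}. The extra measure-theoretic bookkeeping you include (membership of the geometric mean in $L$, a.e.\ finiteness) is implicit in the paper's argument, so there is nothing to add.
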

\begin{proof} For almost all $x\in X$ we have by (\ref{gHold})
$$K(f_1 ^{\alpha _1} \cdots f_m ^{\alpha _m})(x)=  \int_X k(x,y) f_1 ^{\alpha _1}(y) \cdots f_m ^{\alpha _m}(y) \, d\nu(y) $$
$$=  \int_X (k(x,y) f_1 (y)) ^{\alpha _1} \cdots (k(x,y)f_m(y)) ^{\alpha _m} \, d\nu(y) \le  (Kf_1 (x) )^{\alpha _1}  \cdots (Kf_m (x)) ^{\alpha _m},  $$
which proves (\ref{Kalpha}).

The inequalities (\ref{Kseminorm}) now follow from  (\ref{Kalpha}) and (\ref{rho}).
\end{proof}

Let $0<\alpha <1$ and $f_i , g_i \in M(X,\mu)_+$ for $i=1,\ldots ,m$. As already noted (but applied in a different way) in \cite[Inequality (15)]{P06}
an application of 
H\"{o}lder's inequality or (\ref{sharpY}) gives
\be
f_1 ^{\alpha}g_1 ^{1-\alpha} + \ldots +f_m ^{\alpha}g_m ^{1-\alpha} \le (f_1+ \ldots + f_m)^{\alpha}(g_1+ \ldots + g_m)^{1-\alpha}.
\label{H}
\ee
This implies the following result.
\begin{proposition}  Let $L$ and $N$ be ideals in $M_0(Y,\nu)$ and $M_0(X,\mu)$ respectively, such that $Y$ is the carrier of $L$. If $K:L \to N$ is a positive kernel operator and $\alpha _i > 0$, $i=1,\dots, m$, such that $\sum _{i=1} ^m \alpha _i =1$, then 
$$K(f_1 ^{\alpha}g_1 ^{1-\alpha} + \ldots +f_m ^{\alpha}g_m ^{1-\alpha}) \le K\left((f_1+ \ldots + f_m)^{\alpha}(g_1+ \ldots + g_m)^{1-\alpha}\right) $$
\be
 \;\;\;  \;\;\;  \;\;\;  \;\;\;  \;\;\;  \;\;\;  \;\;\;  \;\;\;  \;\;\;  \;\;\;  \;\;\;  \;\;\;  \;\;\;  \;\;\;  \;\;\;  \;\;\le (K(f_1+ \ldots + f_m))^{\alpha}(K(g_1+ \ldots + g_m))^{1-\alpha}.
\label{cool1}
\ee
for all $f_i , g_i \in L_+$, $i=1,\ldots ,m$.

If, in addition, $\|\cdot\|_N $ is a lattice seminorm on $N$, then
$$\|K(f_1 ^{\alpha}g_1 ^{1-\alpha} + \ldots +f_m ^{\alpha}g_m ^{1-\alpha}) \|_N  \le\| K\left((f_1+ \ldots + f_m)^{\alpha}(g_1+ \ldots + g_m)^{1-\alpha}\right)\|_N $$
\be
\le  \|K(f_1+ \ldots + f_m)\|_N ^{\alpha}\|K(g_1+ \ldots + g_m)\|_N ^{1-\alpha}.
\label{cool2}
\ee
\label{+alpha}
\end{proposition}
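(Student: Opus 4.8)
The plan is to derive both displays directly from the pointwise inequality (\ref{H}) together with the already-established Proposition \ref{alfe} in the special case of two functions (exponents $\alpha$ and $1-\alpha$). First I would verify that every expression appearing is genuinely an element of $L$. Since $f_i,g_i\in L_+$, the sums $F:=f_1+\cdots+f_m$ and $G:=g_1+\cdots+g_m$ lie in $L_+$; by Proposition \ref{Riesz} (with the exponents $\alpha,1-\alpha$) the products $f_i^{\alpha}g_i^{1-\alpha}$ and $F^{\alpha}G^{1-\alpha}$ all belong to $L$; and finite sums of these remain in $L$ because $L$ is a vector subspace. This bookkeeping guarantees that $K$ may be applied to each function and that the seminorms in (\ref{cool2}) are well defined.

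For the first line (\ref{cool1}), I would invoke (\ref{H}), which yields the pointwise bound
$$f_1^{\alpha}g_1^{1-\alpha}+\cdots+f_m^{\alpha}g_m^{1-\alpha}\le F^{\alpha}G^{1-\alpha}$$
almost everywhere on $Y$. Because $K$ is a positive, hence monotone, operator (if $0\le u\le v$ then $K(v-u)\ge 0$, so $Ku\le Kv$), applying $K$ to both sides preserves this inequality and produces the left-hand inequality of (\ref{cool1}). The right-hand inequality of (\ref{cool1}) is then exactly inequality (\ref{Kalpha}) of Proposition \ref{alfe}, applied to the two functions $F,G\in L_+$ with exponents $\alpha$ and $1-\alpha$.

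For the norm statement (\ref{cool2}), I would combine these ingredients with the lattice property of $\|\cdot\|_N$. The functions on both sides of the left-hand inequality of (\ref{cool1}) are non-negative, so applying the lattice seminorm to that ordered pair gives the first inequality of (\ref{cool2}) by monotonicity of $\|\cdot\|_N$. The second inequality of (\ref{cool2}) is precisely the norm estimate (\ref{Kseminorm}) of Proposition \ref{alfe}, again in the two-function case, with functions $F,G$ and exponents $\alpha,1-\alpha$.

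I do not anticipate a genuine obstacle: the result is an assembly of (\ref{H}), monotonicity of positive operators, monotonicity of the lattice seminorm, and Proposition \ref{alfe}. The only point requiring a little care is the membership bookkeeping of the first paragraph, ensuring that every intermediate function lies in $L$ so that all operator images and seminorms are defined, and observing that the hypotheses enter only through the single exponent $\alpha\in(0,1)$ and its complement $1-\alpha$.
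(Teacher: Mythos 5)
Your proposal is correct and follows essentially the same route as the paper: the left inequality of (\ref{cool1}) comes from (\ref{H}) and the positivity (monotonicity) of $K$, the right inequality is (\ref{Kalpha}) applied to the two functions $F, G$ with exponents $\alpha, 1-\alpha$, and (\ref{cool2}) then follows from (\ref{cool1}), the lattice seminorm property, and (\ref{Kseminorm}). The extra membership bookkeeping you include is a harmless (indeed welcome) elaboration of what the paper leaves implicit.
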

\begin{proof} The inequalities (\ref{cool1}) follow from (\ref{H}) and (\ref{Kalpha}), while the inequalities (\ref{cool2}) follow from (\ref{cool1}) and (\ref{Kseminorm}).
\end{proof}

\vspace{5mm}
Now we prove our main result which is an infinite dimensional generalization and extension of the main result from  \cite{S93}, which was a crucial step in the proof of the existence of Nash equilibria in the model of
an exchange economy with complete markets studied in \cite{SY89}.
For a substochastic kernel operator $S$ on $L$ we define
$$C(S)=\{f\in L_+: f \gg 0 \;\;\mathrm{and \;\; there \;\; exists \;\; a \;\; stochastic}\;\; A\ge S \;\;\mathrm{such \;\; that}\;\; Af=f \}.$$
 The meaning of the set $C(S)$ in the economic setting of strategic market games is explained in Remark \ref{Aleluja}(ii).

\begin{theorem}
Let $L \subset L^1(X,\mu)$ be an ideal, such that $X$ is the carrier of $L$. Assume that $S$ a substochastic kernel operator on L, which is not a stochastic operator and let  $\alpha _i \ge 0$, $i=1,\dots, m$, 
such that $\sum _{i=1} ^m \alpha _i =1$. Then it holds:

\noindent (i) $C(S)=\{f\in L_+: f \gg 0, Sf\le f \}$;

\noindent (ii) $C(S)$ is a wedge;

\noindent (iii) $f_1 ^{\alpha _1}f_2 ^{\alpha _2} \cdots f_m ^{\alpha _m} \in C(S)$ if $f_1,f_2, \dots, f_m \in C(S)$, i.e. $C(S)$ is a logarithmic convex set.

\noindent (iv) If $f_1,f_2, \dots, f_m \in C(S)$, then
\be 
\|S(f_1 ^{\alpha _1}f_2 ^{\alpha _2} \cdots f_m ^{\alpha _m})\|_L \le \|f_1 ^{\alpha _1}f_2 ^{\alpha _2} \cdots f_m ^{\alpha _m}\|_L \le \|f_1 \|_L ^{\alpha _1}\|f_2 \|_L ^{\alpha _2} \cdots \|f_m\|_L ^{\alpha _m} 
\ee
for any lattice seminorm on $\|\cdot\|$ on $L$.

\noindent (v) If $f_i , g_i \in C(S)$ for $i=1,\ldots ,m$ and $\alpha \in (0,1)$, then 
$$\|S(f_1 ^{\alpha}g_1 ^{1-\alpha} + \ldots +f_m ^{\alpha}g_m ^{1-\alpha}) \|_L \le \|(f_1+ \ldots + f_m)^{\alpha}(g_1+ \ldots + g_m)^{1-\alpha}\|_L $$
\be
\;\;\;  \;\;\;  \;\;\;  \;\;\;  \;\;\;  \;\;\;  \;\;\;  \;\;\;  \;\;\;  \;\;\;  \;\;\;  \;\;\;  \;\;\;  \;\;\;  \;\;\;\le  \|f_1+ \ldots + f_m\|_L ^{\alpha} \cdot \|g_1+ \ldots + g_m\|_L ^{1-\alpha} 
\ee
for any lattice seminorm on $\|\cdot\|_L $ on $L$.
\label{sahi}
\end{theorem}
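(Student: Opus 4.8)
The plan is to reduce the whole theorem to part (i), since once the characterization $C(S)=\{f\in L_+: f\gg 0,\ Sf\le f\}$ is available, parts (ii)--(v) follow formally from the propositions already proved. The easy inclusion in (i) is immediate: if $f\in C(S)$ with a witnessing stochastic $A\ge S$ satisfying $Af=f$, then $Sf\le Af=f$ because $A-S\ge 0$ and $f\ge 0$. So I would put all the work into the reverse inclusion.

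For that reverse inclusion, suppose $f\gg 0$ and $Sf\le f$. Write $s(y)=\int_X s(x,y)\,d\mu(x)\le 1$ for the column-density of $S$, set the deficit $d(y)=1-s(y)\ge 0$, and put $g=f-Sf\in L_+$. I would manufacture a stochastic majorant by adding to $S$ the rank-one kernel
$$ r(x,y)=\frac{g(x)\,d(y)}{D},\qquad D=\int_X g\,d\mu . $$
By Tonelli, $\int_X (Sf)\,d\mu=\int_X f(y)s(y)\,d\mu(y)$, so $D=\int_X f(y)\,(1-s(y))\,d\mu(y)$; since $f\gg 0$ and $S$ is \emph{not} stochastic, the integrand is positive on a set of positive measure, whence $D>0$. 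This is precisely the place where both standing hypotheses ($f\gg 0$ and $S$ non-stochastic) are consumed. A one-line computation then gives $\int_X r(x,y)\,d\mu(x)=d(y)$, so the operator $A=S+R$ has column sums $s(y)+d(y)=1$ and is stochastic, and $\int_X r(x,y)f(y)\,d\mu(y)=g(x)$, so $Rf=g$ and $Af=Sf+g=f$. Because $g\in L$ and $d$ is bounded by $1$ with $L\subset L^1$, $R$ sends each $h\in L$ to a scalar multiple of $g$ and hence maps $L$ into $L$; thus $A\ge S$ is a genuine stochastic operator on $L$ fixing $f$, giving $f\in C(S)$.

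The remaining parts are then formal. For (ii) I would use (i): $f,g\gg 0$ force $f+g\gg 0$ with $S(f+g)\le f+g$, and $\alpha f\in C(S)$ for $\alpha>0$. For (iii), set $h=f_1^{\alpha_1}\cdots f_m^{\alpha_m}$ (discarding any index with $\alpha_i=0$, where the factor equals $1$ a.e.); then $h\gg 0$, $h\in L$ by Proposition \ref{Riesz}, and Proposition \ref{alfe} combined with $Sf_i\le f_i$ and monotonicity of $t\mapsto t^{\alpha_i}$ yields $Sh\le (Sf_1)^{\alpha_1}\cdots(Sf_m)^{\alpha_m}\le f_1^{\alpha_1}\cdots f_m^{\alpha_m}=h$, so $h\in C(S)$ by (i). Part (iv) is immediate from (iii): $h\in C(S)$ gives $0\le Sh\le h$, hence $\|Sh\|\le\|h\|$ by the lattice-seminorm property, and the second inequality is exactly (\ref{rho}). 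For (v) I would set $F=\sum_i f_i$ and $G=\sum_i g_i$, note $F,G\in C(S)$ by (ii) and $F^{\alpha}G^{1-\alpha}\in C(S)$ by (iii), and use (\ref{H}) with positivity of $S$ to get $S(\sum_i f_i^{\alpha}g_i^{1-\alpha})\le S(F^{\alpha}G^{1-\alpha})\le F^{\alpha}G^{1-\alpha}$; the lattice seminorm then yields the first inequality and (\ref{rho}) the second.

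The single real obstacle is the construction in the reverse inclusion of (i), namely exhibiting a stochastic operator dominating $S$ and fixing $f$ while remaining an operator $L\to L$. The rank-one correction above settles this, with the verification $D>0$ being exactly where the hypotheses are needed; beyond that, I would only double-check the integrability bookkeeping ($g\in L\subset L^1$, $d$ bounded) to confirm $A$ maps $L$ into $L$.
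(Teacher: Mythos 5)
Your proposal is correct and follows essentially the same route as the paper: the same rank-one correction $a(x,y)=s(x,y)+\varphi(x)\psi(y)/\lambda$ (your $g$, $d$, $D$ are the paper's $\varphi$, $\psi$, $\lambda$, since $\int_X g\,d\mu=\int_X f(y)(1-s(y))\,d\mu(y)$ by Tonelli --- an identity both proofs use), and the same formal deduction of (ii)--(v) from (i) via Propositions \ref{Riesz}, \ref{alfe} and \ref{+alpha}. The only differences are cosmetic: you normalize by $\int g\,d\mu$ rather than $\int \psi f\,d\mu$, and you are slightly more explicit than the paper about $A$ mapping $L$ into $L$ and about discarding indices with $\alpha_i=0$.
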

\begin{proof}
(i) Let $f\in L_+$ such that $f\gg 0$. We need to prove that $f \in C(S)$ if and only if $Sf \le f$.

If $f\in C(S)$, then $f=Af\ge Sf$.

For the converse let us assume that $Sf \le f$. Let $\varphi := f-Sf \in L_+$ and $\psi =1 - s$, where $s(y)=\int _X s(x,y)\;dx$ for almost all $y\in X$. Then 
$\lambda := \int _X \psi(y)f(y)dy >0$, since $\psi >0$ on the set of positive measure and $\lambda < \infty$, since $f \in L^1 (X, \mu)$. 
Let $A \ge S$ be a positive kernel operator on $L$ with a kernel
$$a(x,y) = s(x,y)+ \frac{1}{\lambda}\varphi(x)\psi(y)$$
for $x,y \in X$. Note that $a(x,y)$ is a $\mu \times \mu$-measurable function
 on $X \times X$ such that, for all $g \in L$ and for almost all $x \in X$,
$\int_X |a(x,y) g(y)| \, dy < \infty$, since $\psi \le 1$, $\varphi < \infty$ almost everywhere and $g \in L^1(X,\mu)$.

For almost all $x \in X$ we have 
$$(Af)(x)= \int_X s(x,y) f(y) \, dy + \frac{1}{\lambda}\varphi(x) \int _{X}\psi(y)f(y) \, dy=
(Sf)(x)+ \varphi(x)=f(x),$$
so $Af=f$. By Fubini's theorem we obtain
$$\lambda = \int _X f(y)\,dy - \int _X s(y)f(y)\,dy = \int _X f(y)\,dy - \int _X \left(\int _X s(x,y)f(y)\, dy \right)\,dx$$
$$=\int _X f(x)\,dx - \int _X \left(Sf \right)(x)\,dx= \int _X \varphi (x) \,dx.$$
Therefore it follows
$$\int _X a(x,y) \, dx = \int _X s(x,y) \, dx + \frac{1}{\lambda}\psi(y) \int _X \varphi(x) \, dx =s(y)+\psi (y)=1$$
for almost all $y \in X$ and so $A$ is a stochastic operator on $L$. So we have proved that \\
$C(S)=\{f\in L_+: f \gg 0, Sf\le f \}$. This also implies (ii).

(iii) Let $f_1,f_2, \dots, f_m \in C(S)$ and $h=f_1 ^{\alpha _1}f_2 ^{\alpha _2} \cdots f_m ^{\alpha _m}$. 
Then we have 
$$ Sh
\le (Sf_1)^{\alpha _1}\cdots (Sf_m)^{\alpha _m}\le f_1^{\alpha _1} \cdots f_m^{\alpha _m}=h$$
by (\ref{Kalpha}) and so $h \in C(S)$, which proves (iii).

The inequalities in (iv) follow from (iii) and (\ref{rho}), while the inequalities in (v) follow from (\ref{cool2}), (ii), (iii) and (\ref{rho}).
\end{proof}
\begin{remark}{\rm  If $\mu (X) <\infty$, then $L_{p} (X,\mu)  \subset L_{1} (X,\mu) $ for all $p \in [1,\infty]$. In this case Theorem \ref{sahi} can be applied to $L=L_{p} (X,\mu)$ equipped  with e.g. standard lattice norms $ \|\cdot\|_p$. Moreover, by \cite[Theorem 6.6 on p.77]{BS88} (see also \cite{CR07}) the same applies to any rearrangement-invariant Banach function space. 
}
\end{remark}

\begin{remark}{\rm If $K$ is a strictly positive operator on $L$ that commutes with a substohastic kernel operator $S$, then $f \in C(S)$ implies $Kf \in C(S)$. Indeed,
$Sf \le f$ implies $SKf=KSf \le Kf$.

Let $\mathcal{A}  _+$ denote the collection of all power series 
$$ F(z)=\sum_{j=0}^\infty\alpha _j z^j $$ 
having nonnegative coeficients $\alpha _j \ge 0$ ($j=0,1,\dots$). 
Let $R_F$ be the radius of convergence of $F\in\mathcal{A}  _+$, that is, we have
$$ \frac{1}{R_F}= \limsup_{j\to\infty} \alpha _j^{1/j} . $$
If $T$ is an operator on a Banach space such that the spectral radius $\rho (T)<R_f$, then the operator $f(T)$ is defined by 
$$ f(T)=\sum_{j=0}^\infty\alpha _j T^j . $$

So if  $L$ is an ideal, which is also a Banach space and if $F\in \mathcal{A} _+$, then $F(S)$ commutes with $S$ and so $f \in C(S)$ implies $F(S)f \in C(S)$, if $F(S)f \gg 0$.
 In particular, by choosing the exponential series and the C. Neumann series for $F\in\mathcal{A} _+$, it follows that

(i) $f\in C(S)$ implies that $\exp (S)f \in C(S)$;

(ii) if $f\in C(S)$ and $\lambda > \rho (S)$, then  $(\lambda I - S)^{-1}f \in C(S)$.}
\label{power}
\end{remark}

\section{Non-negative matrices and applications}

In this final section we  
apply our results to (finite or infinite) non-negative matrices that define (weighted) operators on an ideal $L$ of sequences and  point out possible applications of our results.

 Let $R$ denote the set $\{1, \ldots, n\}$ for some $n \in \NN$ or the set $\NN$ of all natural numbers. 
Let $S(R)$ be the vector lattice of all complex sequences $(x_i)_{i\in R}$ and $L \subset S(R)$ an ideal. In the case when $\mu$ is the counting measure on $R$, then a non-negative matrix $A=[a_{ij}]_{i,j \in R} $ defines a (kernel) operator on $L$ if $Ax \in L$ for all $x\in L$, where
$(Ax)_i = \sum _{j \in R}a_{ij}x_j$.

More generally, throughout this section let $\mu (\{i\})=\omega_i > 0$. Given ideals $L,N \subset S(R)$,  a non-negative matrix $A$ defines a (kernel)  weighted operator $A_{\omega}$ from $L$ to $N$ if $A_{\omega} x \in N$ for all $x\in L$, where
$(A_{\omega} x)_i = \sum _{j \in R}a_{ij}x_j\omega _j$. Note that $A_{\omega}$ is a positive operator, since  $A_{\omega} x \in N_+$ for all $x\in L_+$. Recall that a positive weighted operator $S_{\omega}$ on $L$ is a
(column) substochastic operator, if  $s_j=  \sum _{i \in R} s_{ij}\omega _i \le 1$ for all $j\in R$. The operator $S_{\omega}$ is  stochastic, if $s(y)=1$ for all $j\in R$, and  $S_{\omega}$ is strictly (column) substochastic if $s(y)<1$ for all $j\in R$.   For applications of strictly substochastic operators (matrices) to the Google Page-rank model of internet usage and open Leontief model of an economy we refer the reader to \cite{S10} and the references cited there.

Similarly as in \cite{DP05}, \cite{DP10}, \cite{P12}, \cite{DP16} let us denote by $\mathcal{L}$ the collection of all ideals
$L \subset S(R)$ 
satisfying the property that $e_i = \chi_{\{i\}} \in L$. 
 Note that $S(R)=M_0 (R,\mu)$ and that for $L\in \mathcal{L}$ the set $R$ is the carrier of $L$.
In this setting our Theorem \ref{sahi} says the following.
\begin{corollary}
Let $L \subset l^1 _{\omega}$ such that $L\in \mathcal{L}$. Assume that $S_{\omega }$ is a substochastic weighted operator on $L$, which is not stochastic, and let  $\alpha _i \ge 0$, $i=1,\dots, m$, 
such that $\sum _{i=1} ^m \alpha _i =1$. Then it holds:

\noindent (i) $C(S_{\omega})=\{x \in L_+: x >>0, Sx\le x \}$;

\noindent (ii) $C(S_{\omega })$ is a wedge;

\noindent (iii) $x_1 ^{\alpha _1}x_2 ^{\alpha _2} \cdots x_m ^{\alpha _m} \in C(S_{\omega })$ if $x_1,x_2, \dots, x_m \in C(S_{\omega })$, i.e. $C(S_{\omega })$ is a logarithmic convex set.

\noindent (iv) If $x_1,x_2, \dots, x_m \in C(S_{\omega })$, then
$$\|S _{\omega} (x_1 ^{\alpha _1}x_2 ^{\alpha _2} \cdots x_m ^{\alpha _m})\|_L \le \|x_1 ^{\alpha _1}x_2 ^{\alpha _2} \cdots x_m ^{\alpha _m}\|_L \le \|x_1 \|_L ^{\alpha _1}\|x_2 \|_L ^{\alpha _2} \cdots \|x_m\|_L ^{\alpha _m} $$
for any lattice seminorm on $\|\cdot\|$ on $L$.

\noindent (v) If $x_i , y_i \in C(S_{\omega })$ for $i=1,\ldots ,m$ and $\alpha \in (0,1)$, then 
$$\|S_{\omega}(x_1 ^{\alpha}y_1 ^{1-\alpha} + \ldots +x_m ^{\alpha}y_m ^{1-\alpha}) \|_L \le \|(x_1+ \ldots + x_m)^{\alpha}(y_1+ \ldots + y_m)^{1-\alpha}\|_L $$
$$\;\;\;  \;\;\;  \;\;\;  \;\;\;  \;\;\;  \;\;\;  \;\;\;  \;\;\;  \;\;\;  \;\;\;  \;\;\;  \;\;\;  \;\;\;  \;\;\; \;\le  \|x_1+ \ldots + x_m\|_L ^{\alpha} \cdot \|y_1+ \ldots + y_m\|_L ^{1-\alpha} $$
for any lattice seminorm on $\|\cdot\|_L $ on $L$.
\label{sahi_weight}
\end{corollary}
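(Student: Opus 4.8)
The plan is to observe that Corollary \ref{sahi_weight} is simply the specialization of Theorem \ref{sahi} to the setting where $X = R$, the measure is the weighted counting measure $\mu(\{i\}) = \omega_i$, and the ideal $L$ lies in the weighted space $l^1_\omega = L^1(R,\mu)$. Thus the entire corollary follows once we verify that the hypotheses of Theorem \ref{sahi} are met in this discrete weighted framework. First I would note that since $L \in \mathcal{L}$, each $e_i = \chi_{\{i\}}$ belongs to $L$, and as remarked in the text, $R$ is the carrier of $L$ and $S(R) = M_0(R,\mu)$. The hypothesis $L \subset l^1_\omega$ is exactly the requirement $L \subset L^1(X,\mu)$ of the theorem.

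Next I would check that a substochastic weighted operator $S_\omega$ in the sense defined in this section coincides with a substochastic kernel operator in the sense of Section 1. Here the kernel is $k(i,j) = s_{ij}$, and the integral $(S_\omega x)_i = \int_R s(i,j) x(j)\,d\mu(j) = \sum_{j \in R} s_{ij} x_j \omega_j$ matches the weighted action. The substochasticity condition $\int_R k(i,j)\,d\mu(i) = \sum_{i \in R} s_{ij}\omega_i = s_j \le 1$ for all $j$ is precisely the column-substochastic condition stated for $S_\omega$, and likewise ``not stochastic'' means $s_j < 1$ on a set of positive measure, matching the theorem's hypothesis that $S$ is not stochastic. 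With these identifications in place, the conclusions (i)--(v) are literal transcriptions of the corresponding items of Theorem \ref{sahi}, with $f_i$ replaced by $x_i$ and $g_i$ by $y_i$, and with $\gg 0$ meaning $x_i > 0$ for all $i \in R$ (every point has strictly positive measure $\omega_i > 0$).

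The bulk of the argument is therefore a dictionary check, and the only genuine point requiring care is the construction of the stochastic majorant $A \ge S_\omega$ in part (i); but this is handled verbatim by the proof of Theorem \ref{sahi}(i), whose rank-one perturbation $a(i,j) = s_{ij} + \tfrac{1}{\lambda}\varphi_i \psi_j$ (with $\varphi = x - S_\omega x$, $\psi_j = 1 - s_j$, and $\lambda = \sum_j \psi_j x_j \omega_j$) is well-defined precisely because $L \subset l^1_\omega$ guarantees $\lambda < \infty$, while $S_\omega$ not being stochastic guarantees $\lambda > 0$. I would also remark that the containment $x_1^{\alpha_1}\cdots x_m^{\alpha_m} \in L$ needed for items (iii)--(v) is supplied by Proposition \ref{Riesz} applied in this ideal.

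The main obstacle, such as it is, lies entirely in confirming that the notion of ``substochastic'' used in Section 3 for column-weighted matrices is the same object as the ``substochastic kernel operator'' of Section 1 under the weighted counting measure; once the weights $\omega_j$ are correctly absorbed into both the operator action and the column-sum condition, no further work remains and the corollary is immediate. I therefore expect the proof to consist of a single sentence invoking Theorem \ref{sahi} together with these identifications.
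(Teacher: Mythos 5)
Your proposal is correct and matches the paper exactly: the paper offers no separate proof of Corollary \ref{sahi_weight}, stating it simply as what Theorem \ref{sahi} ``says'' when $X=R$, $\mu(\{i\})=\omega_i$, and $L\subset l^1_\omega=L^1(R,\mu)$, which is precisely your dictionary check (kernel $k(i,j)=s_{ij}$, column sums $s_j=\sum_i s_{ij}\omega_i$ matching the substochasticity condition, and $R$ being the carrier since $L\in\mathcal{L}$). Your additional verification that the rank-one construction in part (i) goes through verbatim is a correct and slightly more careful elaboration of the same argument.
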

In the following remark we discuss possible applications of our results.
\begin{remark}
\label{Aleluja}
 {\rm (i) {\bf Application 1.} In the well-known Arrow-Debreu model from the mathematical economy (see e.g. \cite{AB03}, \cite{ABB90},  \cite{D12}  and the references cited there) various commodities are exchanged, produced and consumed. The classical model deals with finitely many commodities, but as pointed out in \cite{D12}, already Debreu proposed to study commodity spaces with an infinite number of commodities. Commodities may be understood as physical goods which may differ on the location or on the time that they are produced or consumed, or on the state of the world in which they become available. If we allow an infinite variation in any of this contingencies, then it is natural to consider economies with infinite number of commodities. 

 Let us suppose that there are countably many commodities, and that the commodity space is a positive cone $L_+$ of an ideal $L \subset S(R)$.
A vector $x = (x_1, x_2, x_3, \ldots ) \in L_+$ represents a {\it commodity bundle}, that is, the
number $x_i$ is the amount of the $i$-th commodity. 
If $\omega_i$  is the price for
one unit of the $i$-th commodity, then we can introduce the price vector
 $\omega = (\omega_1, \omega _2,  \omega _3, . . .)$. If the price vector $\omega$ is fixed for all $x\in L_+$ (as is for example in the case of state-determined prices in some EU countries for certain goods, such as gas, or prices for official or some hospital services, etc.)
 and $L\subset l^{1} _{\omega}(R)$, then the value of a commodity bundle $x$ at price $\omega$ is equal to $\|x\|_{l^1  _{\omega}} =\sum _{i \in R} x_i \omega _i$ and $\|x\|_{l^{\infty}  _{\omega}}= \sup_{i \in R} x_i \omega _i = \max_{i \in R} x_i \omega _i $  equals the largest value of the commodities in $x$ at price $\omega$.
 If $S_{\omega }$ is a substochastic weighted operator on $L$,  which is not stochastic, then by Corollary \ref{sahi_weight}(iii) the wedge $C(S_{\omega})$ is a logarithmic convex set. Inequalities in Propositions \ref{Riesz}, \ref{alfe} an \ref{+alpha} and
 Corollary \ref{sahi_weight}(iv)  and (v) provide bounds  for values of considered commodity bundles. In the described setting the number $x_1 ^{\alpha _1}(k)x_2 ^{\alpha _2}(k) \cdots x_m ^{\alpha _m}(k)$,  
for $\alpha _i > 0$, $i=1, \ldots, m$, such that $\sum _{i=1} ^m \alpha _i = 1$, could represent the customers taste or {\it preference} for the $k$th commodity at $m$ different suppliers, depending on the quality of the service eventhough the price for these services is the same at each supplier (e.g. quality of gas, quality of health services, feel-good influence on the customers, etc.). Therefore $ \|x_1 ^{\alpha _1}x_2 ^{\alpha _2} \cdots x_m ^{\alpha _m}\|_{l^1  _{\omega}}$ and  
$ \|x_1 ^{\alpha _1}x_2 ^{\alpha _2} \cdots x_m ^{\alpha _m}\|_{l^{\infty}  _{\omega}}$ represent the value  of the  preference vector $x_1 ^{\alpha _1}x_2 ^{\alpha _2} \cdots x_m ^{\alpha _m}$ at price $\omega$ and the largest value of the commodities in the preference vector, respectively.

(ii) {\bf Application 2.}  Let us describe what kind of role plays our Theorem \ref{sahi}(i) and (iii) (and in particular Corollary \ref{sahi_weight}(i) and (iii)) in the theory of  strategic market games  (for details see \cite{SY89}, \cite{ASSY90}). These results generalize 
results of \cite[Theorem at p.1035 and Lemma at p.1036]{S93} to the infinite dimensional setting. Moreover, \cite[Theorem at p.1035 and Lemma at p.1036]{S93} are merely mathematically formal versions of \cite[Lemma 3 and Lemma 4 (i)]{SY89}. 
In \cite{SY89} an exchange economy with complete markets is described and a general theorem for the existence of active Nash equilibria is proved (\cite[Theorem 1]{SY89}). It is further shown in \cite{SY89} that under replication of traders, these equilibria approach the competitive equilibria of the economy. This model was first proposed by L. Shapley and it represents one of the two possible generalitations (for the second see \cite{ASSY90})  of the "single money" model described  by Dubey and Shubik. It has a pleasant feature that it yields consistent prices. 

 The wedge $C(S)$ from Theorem \ref{sahi} corresponds to the set of all possible positive multiples of price vectors that arise as trader $\alpha$ varies his bid in his strategic set, where a trader $\alpha$ is the only trader with unfixed bids.
As pointed out in \cite{SY89},  \cite[Lemma 3]{SY89}  "is more or less the heart of the argument" (together with the Kakutani's fixed point theorem) in the proof of the existence of Nash equilibria. 
Therefore, our Theorem 2.4, together with a suitable fixed point theorem (see e.g. \cite{BBM14} and the references cited there for some further extensions of Kakutani's fixed point theorem),  is expected to be a key ingredient in the development of the infinite dimensional version of this exchange economy.

(iii) {\bf Applications 3 and 4.} Assume that $S_{\omega}$ is a substochastic weighted operator on ideal $L \subset l^1  _{\omega}$, which is also a Banach space. By Remark \ref{power}(ii)
 $(I - S_{\omega})^{-1}x \in C(S_{\omega})$ if $x\in C(S_{\omega})$. In a special finite dimensional case ($R= \{1, \ldots ,n \}$, $\omega _i =1$ for all $i=1, \ldots, n$, $S_{\omega} = S$) this can be interpreted in the  Google Page-rank model of internet usage and in the  open Leontief model of an economy (see e.g. \cite{S10} and the references cited there)

The open Leontief model of an economy deals with the case of $n$ industries each producing
exactly one good. The production of one unit of good $j$ requires inputs $s_{ij} \ge 0$ of the other goods $i$. Goods 
are measured in “dollars-worth” units, and one usually assumes that every industry runs at a profit,
i.e. it costs less than a dollar to produce one dollar’s worth of any good. This means that the technology
matrix $S=[s_{ij}]$ is strictly (column) substochastic. In order to produce a vector $p = (p_i)$ of goods, the production process consumes $Sp$, leaving only
the excess vector $c = p - Sp$ available for outside use. One thinks of $c$ as a "demand" vector and $p$ as a
"supply" vector, and solving for $p$ in terms of $c$ one gets
$p = (I - S)^{-1} c$.
Since S is strictly column-stochastic the spectral radius of S is less than 1, and $ Y=(I - S)^{-1}$ is a nonnegative
matrix given by  C. Neumann's series. $Y$ is called an impact matrix, since the $ij$th entry of $Y$ is the partial derivative $y_{ij} = \frac{\partial p_i}{\partial c _j}$ and represents the
increase in supply of good $i$ in response to a $1$ unit increase in the demand of good $j$.

The Google Page-rank model  involves a discrete Markovian birth-death process that is specified by a nonnegative
vector $x=(x_i)$ and a non-negative matrix $ S= [s_{ij}]$. Here $x_i$ represents the number of births (initial
visits) per unit time in site $i$, and $s_{ij}$ is the transition probability from site $j$ to site $i$. The matrix $S$
is strictly (column) substochastic, since there is a positive probability of death (logging off). The
steady state vector $p$ (Page-rank) satisfies $p = x + Sp$, whence we get
$p=(I - S)^{-1}x $.
}
\end{remark}

\baselineskip 5mm

\noindent {\bf Acknowledgements.}  The author thanks  Marko Kandi\'{c}, Roman Drnov\v{s}ek and Helena \v{S}migoc  for reading the early version of this paper and for their comments that improved its presentation.

This work was supported in part by grant P1-0222 of the Slovenian Research Agency and
by the JESH grant of the Austrian Academy of Sciences.

\vspace{2mm}

\noindent
\noindent
Aljo\v sa Peperko, \\
Faculty of Mechanical Engineering, University of Ljubljana \\
A\v{s}ker\v{c}eva 6, SI-1000 Ljubljana, Slovenia {\it and} \\
\\
Institute of Mathematics, Physics and Mechanics \\
Jadranska 19, SI-1000 Ljubljana, Slovenia \\
e-mails : aljosa.peperko@fs.uni-lj.si, aljosa.peperko@fmf.uni-lj.si

\end{document}